\providecommand{\U}[1]{\protect\rule{.1in}{.1in}}
\providecommand{\U}[1]{\protect\rule{.1in}{.1in}}
\providecommand{\U}[1]{\protect\rule{.1in}{.1in}}
\providecommand{\U}[1]{\protect\rule{.1in}{.1in}}
\providecommand{\U}[1]{\protect\rule{.1in}{.1in}}
\providecommand{\U}[1]{\protect\rule{.1in}{.1in}}
\providecommand{\U}[1]{\protect\rule{.1in}{.1in}}
\providecommand{\U}[1]{\protect\rule{.1in}{.1in}}
\providecommand{\U}[1]{\protect\rule{.1in}{.1in}}
\providecommand{\U}[1]{\protect\rule{.1in}{.1in}}
\providecommand{\U}[1]{\protect\rule{.1in}{.1in}}
\providecommand{\U}[1]{\protect\rule{.1in}{.1in}}
\providecommand{\U}[1]{\protect\rule{.1in}{.1in}}
\providecommand{\U}[1]{\protect\rule{.1in}{.1in}}
\providecommand{\U}[1]{\protect\rule{.1in}{.1in}}
\providecommand{\U}[1]{\protect\rule{.1in}{.1in}}
\providecommand{\U}[1]{\protect\rule{.1in}{.1in}}
\providecommand{\U}[1]{\protect\rule{.1in}{.1in}}
\providecommand{\U}[1]{\protect\rule{.1in}{.1in}}
\providecommand{\U}[1]{\protect\rule{.1in}{.1in}}
\providecommand{\U}[1]{\protect\rule{.1in}{.1in}}
\providecommand{\U}[1]{\protect\rule{.1in}{.1in}}
\providecommand{\U}[1]{\protect\rule{.1in}{.1in}}
\providecommand{\U}[1]{\protect\rule{.1in}{.1in}}
\providecommand{\U}[1]{\protect\rule{.1in}{.1in}}
\providecommand{\U}[1]{\protect\rule{.1in}{.1in}}
\providecommand{\U}[1]{\protect\rule{.1in}{.1in}}
\providecommand{\U}[1]{\protect\rule{.1in}{.1in}}
\newtheorem{theorem}{Theorem}
{}
\newtheorem{condition}{Condition}
\newtheorem{corollary}{Corollary}
\newtheorem{definition}{Definition}
\newtheorem{lemma}{Lemma}
{}
\newtheorem{proposition}{Proposition}
\newenvironment{proof}[1][Proof]{\textbf{#1.} }{\ \rule{0.5em}{0.5em}}
\begin{document}

\title{On the Spectral Singularities and Spectrality of the Hill Operator }
\author{O. A. Veliev\\{\small Depart. of Math., Dogus University, Ac\i badem, Kadik\"{o}y, \ }\\{\small Istanbul, Turkey.}\ {\small e-mail: oveliev@dogus.edu.tr}}
\date{}
\maketitle

\begin{abstract}
First we study the spectral singularity at infinity and investigate the
connections of the spectral singularities and the spectrality of the Hill
operator. Then \ we consider the spectral expansion when there is not the
spectral singularity at infinity.

Key Words: Hill operator, Spectral singularities, Spectral operator, Spectral expansion.

AMS Mathematics Subject Classification: 34L05, 34L20.

\end{abstract}

\section{\textbf{Introduction}}

In this paper we investigate the one dimensional Schr\"{o}dinger operator
$L(q)$ generated in $L_{2}(-\infty,\infty)$ by the differential expression
\begin{equation}
l(y)=-y^{^{\prime\prime}}(x)+q(x)y(x),
\end{equation}
where $q$ is 1-periodic, Lebesgue integrable on $[0,1]$ and complex-valued
potential. Without loss of generality we assume that the integral of $q$ over
$[0,1]$ is $0.$ It is well-known [1,5-9] that the spectrum $\sigma(L)$ of the
operator $L$ is the union of the spectra $\sigma(L_{t})$ of the operators
$L_{t}(q)$ for $t\in(-\pi,\pi]$ generated in $L_{2}[0,1]$ by the expression
(1) and the boundary conditions
\begin{equation}
y(1)=e^{it}y(0),\text{ }y^{^{\prime}}(1)=e^{it}y^{^{\prime}}(0).
\end{equation}
The eigenvalues of $L_{t}$ are the roots of the characteristic equation
\begin{equation}
F(\lambda)=2\cos t,
\end{equation}
where $F(\lambda)=:\varphi^{^{\prime}}(1,\lambda)+\theta(1,\lambda)$ is the
Hill discriminant $\theta(x,$ $\lambda)$ and $\varphi(x,$ $\lambda)$ are the
solutions of the equation $l(y)=\lambda y$ satisfying the following initial conditions

$\theta(0,\lambda)=\varphi^{^{\prime}}(0,\lambda)=1,$ $\theta^{^{\prime}%
}(0,\lambda)=\varphi(0,\lambda)=0.$

In this paper we study the spectral singularity of $L(q)$ at infinity,
investigate the connections of the spectral singularities and the spectrality
of $L(q)$ and consider the spectral expansion of $L(q)$ when there is not the
spectral singularity at infinity. Note that the spectral singularities of the
operator $L(q)$ are the points of its spectrum in neighborhoods of which the
projections of $L(q)$ are not uniformly bounded (see [9] and [10]). McGarvey
[6] proved that $L(q)$ is a spectral operator if and only if the projections
of the operators $L_{t}(q)$ are bounded uniformly with respect to $t$ in
$(-\pi,\pi]$. Tkachenko proved in [9] that the non-self-adjoint operator $L$
can be reduced to triangular form if all eigenvalues of the operators $L_{t}$
for $t\in(-\pi,\pi]$ are simple. Gesztezy and Tkachenko [3,4] proved two
versions of a criterion for the Hill operator $L(q)$ with $q\in L_{2}[0,1]$ to
be a spectral operator of scalar type, in sense of Danford, one analytic and
one geometric. The analytic version was stated in term of the solutions of
Hill's equation. The geometric version of the criterion uses algebraic and
geometric \ properties of the spectra of periodic/antiperiodic and Dirichlet
boundary value problems. In paper [12, 13] we found the conditions on the
potential $q$ such that $L(q)$ has no spectral singularity at infinity and it
is an asymptotically spectral operator.

Now let us recall the precise definition of the spectral singularities and
asymptotic spectrality of $L(q).$ Following \ [4, 10], we define the
projections and the spectral singularities of $L$ as follows. By Definition
2.4 of [4], a closed arc
\begin{equation}
\gamma=:\{z\in\mathbb{C}:z=\lambda(t),t\in\lbrack\alpha,\beta]\}
\end{equation}
with $\lambda(t)$ continuous on the closed interval $[\alpha,\beta]$, analytic
in an open neighborhood of $[\alpha,\beta]$ and $F(\lambda(t))=2\cos t,$
\[
\text{ }\frac{\partial F(\lambda(t))}{\partial\lambda}\neq0,\text{ }\forall
t\in\lbrack\alpha,\beta],\text{ }\lambda^{^{\prime}}(t)\neq0,\text{ }\forall
t\in(\alpha,\beta)
\]
is called a regular spectral arc of $L(q).$ The projection $P(\gamma)$
corresponding to the regular spectral arc $\gamma$ is defined by
\begin{equation}
P(\gamma)f=\frac{1}{2\pi}%
{\textstyle\int\limits_{\gamma}}
(\Phi_{+}(x,\lambda)F_{-}(\lambda,f)+\Phi_{-}(x,\lambda)F_{+}(\lambda
,f))\frac{\varphi(1,\lambda)}{p(\lambda)}d\lambda,
\end{equation}
where
\[
\Phi_{\pm}(x,\lambda)=:\theta(x,\lambda)+(\varphi(1,\lambda))^{-1}(e^{\pm
it}-\theta(1,\lambda))\varphi(x,\lambda)
\]
are the Floquet solution and
\[
F_{\pm}(\lambda,f)=\int_{\mathbb{R}}f(x)\Phi_{\pm}(x,\lambda)dx,\text{
}p(\lambda)=\sqrt{4-F^{2}(\lambda)}.
\]

\begin{definition}
We say that $\lambda\in\sigma(L(q))$ is a spectral singularity of $L(q)$ if
for all $\varepsilon>0$\ there exists a sequence $\{\gamma_{n}\}$ of the
regular spectral arcs $\gamma_{n}\subset\{z\in\mathbb{C}:\mid z-\lambda
\mid<\varepsilon\}$ such that
\begin{equation}
\lim_{n\rightarrow\infty}\parallel P(\gamma_{n})\parallel=\infty.
\end{equation}

\end{definition}

In the similar way, we defined in [12] the spectral singularity at infinity.

\begin{definition}
We say that the operator $L$ has a spectral singularity at infinity if there
exists a sequence $\{\gamma_{n}\}$ of the regular spectral arcs such that
$d(0,\gamma_{n})\rightarrow\infty$ as $n\rightarrow\infty$ and (6) holds,
where $d(0,\gamma_{n})$ is the distance from the point $(0,0)$ to the arc
$\gamma_{n}.$
\end{definition}

The asymptotic spectrality of the operator $L(q)$ was defined in [12] as
follows. Let $e(t,\gamma)$ be the spectral projection defined by contour
integration of the resolvent of $L_{t}(q)$, where $\gamma\in R$ and $R$ is the
ring consisting of all sets which are the finite union of the half closed
rectangles. In [6] it was proved\ Theorem 3.5 (for the differential operators
of arbitrary order with periodic coefficients rather than for $L(q)$) which
can be written in the form:

$L(q)$\textit{\ is a spectral operator if and only if }%
\begin{equation}
\sup_{\gamma\in R}(ess\sup_{t\in(-\pi,\pi]}\parallel e(t,\gamma)\parallel
)<\infty.
\end{equation}
According to this theorem, in [12] we gave the following definition of the
asymptotic spectrality.

\begin{definition}
The operator $L(q)$ is said to be an asymptotically spectral operator if there
exists a positive constant $C$ such that
\[
\sup_{\gamma\in R(C)}(ess\sup_{t\in(-\pi,\pi]}\parallel e(t,\gamma
)\parallel)<\infty,
\]
where $R(C)$ is the ring consisting of all sets which are the finite union of
the half closed rectangles lying in $\{\lambda\in\mathbb{C}:\mid\lambda
\mid>C\}.$
\end{definition}

In [12] and [13] we obtained the following results about asymptotic
spectrality under the following conditions.

\begin{condition}
\textit{Let }$q\in W_{1}^{p}[0,1]$\textit{, }$q^{(k)}(0)=q^{(k)}(1),$ for
$\,k=0,1,...,s-1$and $q^{(s)}(0)\neq q^{(s)}(1)$\textit{ for some }$s\leq p.$
Suppose that $q_{n}\sim q_{-n},$ $\mid q_{n}\mid>cn^{-s-1}$ and at least one
of the following inequalities%
\[
\operatorname{Re}q_{n}q_{-n}\geq0,\text{ }\mid\operatorname{Im}q_{n}q_{-n}%
\mid\geq\varepsilon\mid q_{n}q_{-n}\mid
\]
hold for some $c>0$ and $\varepsilon>0$ and for large value of $n,$ where
$q_{n}$ is the Fourier coefficient of the potential $q$ and $q_{n}\sim q_{-n}$
means that $q_{n}=O(q_{-n})$ and $q_{-n}=O(q_{n}).$
\end{condition}

\begin{condition}
Suppose that
\begin{equation}
q(x)=ae^{-i2\pi x}+be^{i2\pi x},
\end{equation}%
\begin{equation}
\mid a\mid=\mid b\mid,\text{ }\inf_{q,p\in\mathbb{N}}\{\mid q\alpha
-(2p-1)\mid\}\neq0,
\end{equation}
where $a$ and $b$ are the complex numbers and $\alpha=\pi^{-1}\arg(ab).$
\end{condition}

In [12] we proved that if Condition 1 holds then $L(q)$ is an asymptotically
spectral operator and has no spectral singularity at infinity\textit{. }It was
proven in [13] that the operator $L(q)$ with the potential (8) is an
asymptotically spectral operator and has no spectral singularity at infinity
if and only if (9) holds.

\section{Spectrum, Spectral Singularity and Spectrality}

First, let us discuss the spectrum of $L(q)$ by using some results of [11, 12]
about the uniform with respect to $t$ in $(-\pi,\pi]$ asymptotic formulae for
eigenvalues of the operator $L_{t}(q)$. Note that, the
formula\ $f(k,t)=O(h(k))$ is said to be uniform with respect to $t$ in a set
$I$ if there exist positive constants $M$ and $N,$ independent of $t,$ such
that
\[
\mid f(k,t))\mid<M\mid h(k)\mid
\]
for all $t\in I$ and $\mid k\mid\geq N.$

In the case $q=0$ the eigenvalues and eigenfunctions of $L_{t}(q)$ are $(2\pi
n+t)^{2}$ and $e^{i(2\pi n+t)x}$ for $n\in\mathbb{Z}$. In [11] we proved that
the large eigenvalues of the operators $L_{t}(q)$\ for $t\neq0,\pi$ consist of
the sequence $\left\{  \lambda_{n}(t):\mid n\mid\gg1\right\}  $ satisfying\ \
\begin{equation}
\lambda_{n}(t)=(2\pi n+t)^{2}+O(\frac{\ln\left\vert n\right\vert }{n}).
\end{equation}
This asymptotic formula is uniform with respect to $t$\ in $[\rho,\pi-\rho
],$where $\rho\in(0,\frac{\pi}{2})$. There exists a positive number $N(\rho
),$\ independent of $t,$\ such that the eigenvalues $\lambda_{n}(t)$\ for \ 

$t\in\lbrack\rho,\pi-\rho]$\ and $\mid n\mid>N(\rho)$\ are simple and hence is
analytic function in the neighborhood of $[\rho,\pi-\rho]$.

In the paper [12] \ we proved that there exists a positive integer $N(0)$ such
that the disk
\begin{equation}
U(n,t,\rho)=:\{\lambda\in\mathbb{C}:\left\vert \lambda-(2\pi n+t)^{2}%
\right\vert \leq15\pi n\rho\}
\end{equation}
for $t\in\lbrack0,\rho],$ where $15\pi\rho<1,$ and $n>N(0)$ contains two
eigenvalues (counting with multiplicities) denoted by $\lambda_{n,1}(t)$ and
$\lambda_{n,2}(t)$ and these eigenvalues can be chosen as continuous function
of $t$ on the interval $[0,\rho].$ In addition to these eigenvalues, the
operator $L_{t}(q)$ for $t\in\lbrack0,\rho]$ has only $2N(0)+1$ eigenvalues
denoted by $\lambda_{k}(t)$ for $k=0,\pm1,\pm2,...,\pm N$ (see Remark 2.1 of
[12]). Similarly, there exists a positive integer $N(\pi)$ such that the disk
$U(n,t,\rho)$ for $t\in\lbrack\pi-\rho,\pi]$ and $n>N(\pi)$ contains two
eigenvalues (counting with multiplicities) denoted again by $\lambda_{n,1}(t)$
and $\lambda_{n,2}(t)$ that are continuous function of $t$ on the interval
$[\pi-\rho,\pi].$ In addition to these eigenvalues, the operator $L_{t}(q)$
for $t\in\lbrack\pi-\rho,\pi]$ has only $2N(\pi)+1$ eigenvalues. Thus for $n>$
$N=:\max\left\{  N(\rho),N(0),N(\pi)\right\}  ,$ the eigenvalues
$\lambda_{n,1}(t)$ and $\lambda_{n,2}(t)$ are continuous on $[0,\rho
]\cup\lbrack\pi-\rho,\pi]$ and for$\mid n\mid>N$ the eigenvalue $\lambda
_{n}(t),$ defined by (10), is analytic function in the neighborhood of
$[\rho,\pi-\rho].$ Moreover, by (10) there exist only two eigenvalues
$\lambda_{-n}(\rho)$ and $\lambda_{n}(\rho)$ of the operator $L_{\rho}(q)$
lying in the disk $U(n,\rho,\rho).$ Therefore these 2 eigenvalues coincides
with the eigenvalues $\lambda_{n,1}(\rho)$ and $\lambda_{n,2}(\rho).$ By (10)
$\operatorname{Re}(\lambda_{-n}(\rho))<\operatorname{Re}(\lambda_{n}(\rho)).$
Let $\lambda_{n,2}(\rho)$ be the eigenvalue whose real part is larger. Then%
\begin{equation}
\lambda_{n,1}(\rho)=\lambda_{-n}(\rho),\text{ }\lambda_{n,2}(t)=\lambda
_{n}(\rho),\text{ }\forall n>N
\end{equation}
In the same way we obtain that
\begin{equation}
\lambda_{n,1}(\pi-\rho)=\lambda_{n}(\pi-\rho),\text{ }\lambda_{n,2}(\pi
-\rho)=\lambda_{-(n+1)}(\pi-\rho),\text{ }\forall n>N
\end{equation}
if $\lambda_{n,2}(\pi-\rho))$ is the eigenvalue whose real part is larger. Let
$\Gamma_{-n}$ be the union of the following continuous curves $\left\{
\lambda_{n-1,2}(t):t\in\lbrack\pi-\rho,\pi]\right\}  ,$ $\left\{  \lambda
_{-n}(t):t\in\lbrack\rho,\pi-\rho]\right\}  $ and $\left\{  \lambda
_{n,1}(t):t\in\lbrack0,\rho]\right\}  .$ By (12) and \ (13) these curves are
connected and $\Gamma_{-n}$ is a continuous curve. Similarly, the curve
$\Gamma_{n}$ which is the union of the curves $\left\{  \lambda_{n,2}%
(t):t\in\lbrack0,\rho]\right\}  ,$ $\left\{  \lambda_{n}(t):t\in\lbrack
\rho,\pi-\rho]\right\}  $ and $\left\{  \lambda_{n,1}(t):t\in\lbrack\pi
-\rho,\pi]\right\}  $ is a continuous curve. For $t\in\lbrack0,\rho]$ redenote
$\lambda_{n,1}(t)$ by $\lambda_{-n}(t)$ and $\lambda_{n,2}(t)$ by $\lambda
_{n}(t),$ where $n>N.$ In the same way we put $\lambda_{n}(t)=:\lambda
_{n,1}(t),$ $\lambda_{-(n+1)}(t)=:\lambda_{n,2}(t)$ for $t\in\lbrack\pi
-\rho,\pi]\ $\ and $n>N.$ In this notation we have%
\begin{equation}
\Gamma_{n}=\left\{  \lambda_{n}(t):t\in\lbrack0,\pi]\right\}  .
\end{equation}

The eigenvalues of $L_{-t}(q)$ coincides with the eigenvalues of $L_{t}(q),$
because they are roots of equation (3) and $\cos(-t)=\cos t.$ We define the
eigenvalue $\lambda_{n}(-t)$ of $L_{-t}(q)$ by $\lambda_{n}(-t)=\lambda
_{n}(t)$ for all $t\in(0,\pi).$ Then $\lambda_{n}(t)$ for $\mid n\mid>N$ is an
continuous function on $(-\pi,\pi]$, $\Gamma_{n}=\left\{  \lambda_{n}%
(t):t\in(-\pi,\pi]\right\}  $ and
\[
\sigma(L(q))=%
{\textstyle\bigcup\limits_{t\in(-\pi,\pi]}}
\sigma(L_{t}(q))\supset%
{\textstyle\bigcup\limits_{\mid n\mid>N}}
\Gamma_{n}.
\]
Thus the spectrum $\sigma(L)$ of the operator $L$ contains the continuous
curves $\Gamma_{n}$ for$\mid n\mid>N.$ The remaining part of $\sigma(L)$
consist of finite simple analytic arcs $\gamma_{1},\gamma_{2},...,\gamma_{m}$
whose endpoints are the eigenvalues of $\ L_{0}(q)$ and $L_{\pi}(q)$ and the
roots of the equations $\frac{dF(\lambda)}{d\lambda}=0$ lying in the spectrum
\ (see [9]). On the other hand the above arguments show that $\gamma_{1}%
\cup\gamma_{2}\cup...\cup\gamma_{m}$ is the union of $2N+1$ eigenvalues
(counting multiplicity and denoted by $\lambda_{n}(t)$ for $n=0,\pm1,...,\pm
N)$ of $L_{t}(q)$ for $t\in(-\pi,\pi]$. Moreover if $\lambda_{n}(t)$ is a root
of (3) of multiplicity $k,$ then it is the end point of $k$ curves
$\gamma_{n_{1}},\gamma_{n_{2}},...,\gamma_{n_{k}},$ that is, these $k$ curves
are joined by $\lambda_{n}(t).$ Therefore one can numerate the eigenvalues so
that
\begin{equation}
\gamma_{1}\cup\gamma_{2}\cup...\cup\gamma_{m}=\cup_{\mid n\mid\leq N}%
\Gamma_{n}%
\end{equation}
where $\Gamma_{n}=\left\{  \lambda_{n}(t):t\in(-\pi,\pi]\right\}  $ for $\mid
n\mid\leq N$ are continuous curves. Thus we have
\begin{equation}
\sigma(L(q))=%
{\textstyle\bigcup\limits_{n\in\mathbb{Z}}}
\Gamma_{n}.
\end{equation}

Using (10), (11) and the definition of $\lambda_{n}(t)$ one can readily see
that
\begin{equation}
\left\vert \lambda_{n}(t)-(2\pi k\pm t)^{2}\right\vert \geq\left\vert
(n-k)\right\vert \left\vert n+k\right\vert
\end{equation}
for $k\neq\pm n,\pm(n+1)$ and $t\in(-\pi,\pi],$ where $\left\vert n\right\vert
>N$ . In [11], [12] to write the asymptotic formulas for the eigenfunctions
$\Psi_{n,t}(x)$ corresponding to the eigenvalue $\lambda_{n}(t)$ we used the
following relations
\begin{equation}
(\lambda_{n}(t)-(2\pi k+t)^{2})(\Psi_{n,t},e^{i(2\pi k+t)x})=(q\Psi
_{n,t},e^{i(2\pi k+t)x}),
\end{equation}
\begin{equation}
\left\vert (q\Psi_{n,t},e^{i(2\pi k+t)x})\right\vert <2M
\end{equation}
for $t\in(-\pi,\pi],$ $\left\vert n\right\vert >N$ and $k\in\mathbb{Z},$
where
\[
M=\sup_{n\in\mathbb{Z}}\left\vert q_{n}\right\vert ,\text{ }q_{n}=\int_{0}%
^{1}q(x)e^{-i2\pi nx}dx.
\]
From (17)-(19) we obtain the following, uniform with respect to $t$\ in
$(-\pi,\pi],$ asymptotic formulas
\[
\sum_{k\in\mathbb{Z}\backslash\left\{  \pm n,\pm(n+1)\right\}  }\left\vert
(\Psi_{n,t},e^{i(2\pi k+t)x})\right\vert ^{2}=O(n^{-2})
\]
and \
\[
\sum_{k\in\mathbb{Z}\backslash\left\{  \pm n,\pm(n+1)\right\}  }\left\vert
(\Psi_{n,t},e^{i(2\pi k+t)x})\right\vert =O(\frac{\ln n}{n})
\]
Therefore $\Psi_{n,t}(x)$ has an expansion of the form
\begin{equation}
\Psi_{n,t}(x)=\sum_{k\in\left\{  \pm n,\pm(n+1)\right\}  }u_{n,k}(t)e^{i(2\pi
k+t)x}+h_{n,t}(x),
\end{equation}
where
\begin{equation}
\text{ }\left\Vert h_{n,t}\right\Vert =O(n^{-1}),\text{ }\sup_{x\in
\lbrack0,1],\text{ }t\in(-\pi,\pi]}\mid h_{n,t}(x)\mid=O\left(  \frac
{ln\left\vert n\right\vert }{n}\right)  ,\text{ }(h_{n,t},e^{i(2\pi k+t)x})=0
\end{equation}
for $k\in\left\{  \pm n,\pm(n+1)\right\}  $ and%
\begin{equation}
u_{n,k}(t)=(\Psi_{n,t},e^{i(2\pi k+t)x}).
\end{equation}
Let $\{\chi_{n,t}:n\in\mathbb{Z\}}$ be biorthogonal to $\left\{  \Psi
_{n,t}:n\in\mathbb{Z}\right\}  $ and $\Psi_{n,t}^{\ast}(x)$ be the normalized
eigenfunction of $(L_{t}(q))^{\ast}$ corresponding to $\overline{\lambda
_{n}(t)}.$ Since the boundary condition (2) is self-adjoint we have
$(L_{t}(q))^{\ast}=$ $L_{t}(\overline{q}).$ Therefore, we have
\begin{equation}
\Psi_{n,t}^{\ast}(x)=\sum_{k\in\left\{  \pm n,\pm(n+1)\right\}  }u_{n,k}%
^{\ast}(t)e^{i(2\pi k+t)x}+h_{n,t}^{\ast}(x),
\end{equation}
where $u_{n,k}^{\ast}(t)=(\Psi_{n,t}^{\ast},e^{i(2\pi k+t)x})$ and%
\begin{equation}
\left\Vert h_{n,t}^{\ast}\right\Vert =O(n^{-1}),\text{ }\sup_{x\in
\lbrack0,1],\text{ }t\in(-\pi,\pi]}\mid h_{n,t}(x)\mid=O\left(  \frac
{ln\left\vert n\right\vert }{n}\right)  ,\text{ }(h_{n,t}^{\ast},e^{i(2\pi
k+t)x})=0
\end{equation}
for $k\in\left\{  \pm n,\pm(n+1)\right\}  .$

Introduce the functions
\begin{equation}
\alpha_{n}(t)=(\Psi_{n,t}(x),\Psi_{n,t}^{\ast}(x))_{(0,1)},\text{ }\chi
_{n,t}(x))=\frac{1}{\overline{\alpha_{n}(t)}}\Psi_{n,t}^{\ast}(x),\text{ }%
\end{equation}
where $(.,.)_{(a,b)}$ denotes the inner product in $L_{2}(a,b).$ One can
easily verify that
\begin{equation}
\Psi_{n,t}(x)=\frac{\Phi_{+}(x,\lambda_{n}(t))}{\mid\Phi_{+}(x,\lambda
_{n}(t))\mid},\text{ }\chi_{n,t}(x))=\frac{1}{\overline{\alpha_{n}(t)}}%
\frac{\overline{\Phi_{-}(x,\lambda_{n}(t))}}{\mid\overline{\Phi_{-}%
(x,\lambda_{n}(t))}\mid},
\end{equation}%
\begin{equation}
\Psi_{n,t}(x+1)=e^{it}\Psi_{n,t}(x),\text{ }\Psi_{n,t}^{\ast}(x+1)=e^{it}%
\Psi_{n,t}^{\ast}(x),\text{ }\chi_{n,t}(x+1)=e^{it}\chi_{n,t}(x),
\end{equation}
where $\Phi_{+}$ and $\Phi_{-}$ are defined in (5). Using these formulas and
the equalities%
\begin{equation}
d\lambda=-p(\lambda)\left(  \frac{dF}{d\lambda}\right)  ^{-1}dt,\text{ }%
\frac{dF(\lambda_{n}(t))}{d\lambda}=-\varphi(1,\lambda_{n}(t))(\Phi
_{+}(x,\lambda_{n}(t)),\overline{\Phi_{-}(x,\lambda_{n}(t))})
\end{equation}
and changing the variable $\lambda$ to the variable $t$ in the integral (5) we
get
\begin{equation}
P(\gamma)f=\frac{1}{2\pi}%
{\textstyle\int\limits_{\delta}}
(f,\chi_{n,t})_{\mathbb{R}}\Psi_{n,t}dt,
\end{equation}
where $\delta=\{t\in(-\pi,\pi]:\lambda_{n}(t)\in\gamma\}$ and $\gamma\in
\Gamma_{n}.$

It is very natural that the projection of the operator $L(q)$ is connected
with the projection
\begin{equation}
e(t,C)=:P_{t}(\lambda_{n}(t))f=\frac{1}{2\pi i}%
{\textstyle\int\limits_{C}}
(L_{t}-\lambda I)^{-1}fdt
\end{equation}
of the operator $L_{t}(q),$ where $C$ is a closed contour containing the
eigenvalue $\lambda_{n}(t)$ but no other eigenvalues.

In this paper \ we use the following proposition which was proved in [10].

\begin{proposition}
Let $\gamma\subset\Gamma_{n}$ be regular spectral arc and $\delta=\{t:$
$\lambda_{n}(t)\in\gamma\}.$ Then the operators $P(\gamma)$ and $P_{t}%
(\lambda_{n}(t))$ for $t\in\delta$ are projections and
\begin{equation}
\parallel P_{t}(\lambda_{n}(t))\parallel=\mid\alpha_{n}(t)\mid^{-1},
\end{equation}%
\begin{equation}
\parallel P(\gamma)\parallel=\sup_{t\in\delta}\parallel P_{t}(\lambda
_{n}(t))\parallel=\sup_{t\in\delta}\mid\alpha_{n}(t)\mid^{-1}.
\end{equation}
The eigenvalue $\lambda_{n}(t),$ where $t\in(-\pi,\pi],$ is a spectral
singularity of $L(q)$ if and only if the operator $L_{t}(q)$ has an associated
function corresponding to the eigenvalue $\lambda_{n}(t).$
\end{proposition}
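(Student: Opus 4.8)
The plan is to reduce everything to the rank-one structure of the fiber projections and to a direct-integral identity, and then to read off the spectral-singularity criterion from the vanishing of $\alpha_n(t)$. I would first observe that on a regular spectral arc each $\lambda_n(t)$ is an algebraically simple eigenvalue of $L_t(q)$: this is immediate from the defining condition $\partial F/\partial\lambda\neq0$, since the eigenvalues of $L_t$ are the roots of $F(\lambda)=2\cos t$ and $\lambda_n(t)$ is then a simple root. Hence the contour integral (31) produces a rank-one idempotent, and using the biorthonormal pair from (26) one checks $P_t(\lambda_n(t))f=(f,\chi_{n,t})\Psi_{n,t}$ with $(\Psi_{n,t},\chi_{n,t})=1$, so that $P_t^2=P_t$ follows directly. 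The same rank-one form, integrated in $t$, gives $P(\gamma)^2=P(\gamma)$ once the direct-integral identity below is in hand.

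Next I would prove the norm formula (32). For a rank-one idempotent $f\mapsto(f,\chi)\Psi$ with $(\Psi,\chi)=1$ the operator norm equals $\|\Psi\|\,\|\chi\|$. Since $\Psi_{n,t}$ is normalized we have $\|\Psi_{n,t}\|=1$, while (26) gives $\chi_{n,t}=\overline{\alpha_n(t)}^{-1}\Psi_{n,t}^{\ast}$ with $\Psi_{n,t}^{\ast}$ normalized, so $\|\chi_{n,t}\|=|\alpha_n(t)|^{-1}$. This yields $\|P_t(\lambda_n(t))\|=|\alpha_n(t)|^{-1}$.

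The heart of the matter, and what I expect to be the main obstacle, is (33): passing from the norm of $P(\gamma)$ on $L_2(\mathbb{R})$ to the supremum of the fiber norms. Here I would invoke the Floquet--Gelfand transform realizing $L_2(\mathbb{R})$ as the direct integral $\int^{\oplus}_{(-\pi,\pi]}L_2[0,1]\,\frac{dt}{2\pi}$, under which $L(q)$ becomes $\int^{\oplus}L_t(q)\,dt$. Formula (30) exhibits $P(\gamma)$ as the decomposable operator whose fiber is $P_t(\lambda_n(t))$ for $t\in\delta$ and $0$ otherwise; the norm of a decomposable operator is the essential supremum of its fiber norms, giving $\|P(\gamma)\|=\operatorname{ess\,sup}_{t\in\delta}|\alpha_n(t)|^{-1}$. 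The upgrade from essential supremum to genuine supremum then follows from continuity of $t\mapsto\alpha_n(t)$ on $\delta$, itself a consequence of the analyticity of the eigenprojection along the regular arc. Making the identification of $P(\gamma)$ with a decomposable operator fully rigorous — in particular verifying that the generalized pairings $(f,\chi_{n,t})_{\mathbb{R}}$ are exactly the Floquet transform coefficients — is the delicate point.

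Finally I would prove the last equivalence by showing that at the degeneration point the existence of an associated function is equivalent to $\alpha_n(t)=0$. The forward direction is an algebraic identity: if $(L_t-\lambda_n(t))w=\Psi_{n,t}$ and $v$ is the eigenfunction of $(L_t(q))^{\ast}=L_t(\overline q)$ at $\overline{\lambda_n(t)}$, then $(\Psi_{n,t},v)=((L_t-\lambda_n(t))w,v)=(w,(L_t^{\ast}-\overline{\lambda_n(t)})v)=0$, whence $\alpha_n(t)=0$. For the converse I would use that the second-order operator $L_t$ has eigenvalues of algebraic multiplicity at most two along the arcs, so that $\alpha_n(t)=0$ forces non-semisimplicity unless the monodromy matrix is scalar; excluding that geometrically-double case is the subtle part of this direction. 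With $\alpha_n(t_0)=0$ established, continuity of $\alpha_n$ together with (33) produces regular arcs $\gamma_k\to\lambda_n(t_0)$ with $\|P(\gamma_k)\|=\sup|\alpha_n(t)|^{-1}\to\infty$, i.e.\ a spectral singularity; conversely a spectral singularity at $\lambda_n(t_0)$ forces $\inf|\alpha_n|\to0$ on shrinking $t$-intervals, hence $\alpha_n(t_0)=0$ by continuity, yielding the associated function.
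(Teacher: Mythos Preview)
The paper does not prove this proposition: immediately before stating it the author writes ``In this paper we use the following proposition which was proved in [10]'' and then simply quotes the result. There is therefore no proof in the present paper to compare your proposal against.

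That said, your outline is the standard route and is essentially what is done in [10] and related work: rank-one Riesz projection on each fiber, the Gelfand direct-integral decomposition to pass from $L_2(\mathbb{R})$ to the fibers $L_2[0,1]$ (this is exactly the mechanism (43)--(45) that the paper already invokes from [2]), and the identification of $P(\gamma)$ with a decomposable operator via (29)--(30). Your computation of $\|P_t(\lambda_n(t))\|=|\alpha_n(t)|^{-1}$ and the $\mathrm{ess\,sup}$ formula are correct, and the upgrade to a true supremum by continuity of $\alpha_n$ is fine on a regular arc. For the final equivalence, your argument that an associated function forces $\alpha_n(t)=0$ is the standard self-adjointness-of-the-boundary-condition computation; for the converse you correctly flag the only subtlety, namely the geometrically-double case at $t=0,\pi$. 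Note that the paper itself handles exactly this issue in the paragraph following Proposition~1 (for $t\neq0,\pi$ there cannot be two independent eigenfunctions), and in the geometrically-double periodic/antiperiodic case the Riesz projection onto the two-dimensional eigenspace is bounded, the pairing is non-degenerate, and $\alpha_n(t)\neq0$, so no spectral singularity arises --- consistent with the stated criterion.
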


Note that it is well-known and easily checkable that, for $t\neq0,\pi,$
operator $L_{t}$ cannot have two eigenfunctions corresponding to one
eigenvalue $\lambda$ . Indeed if both solution $\varphi(x,\lambda)$ and
$\theta(x,\lambda)$ satisfy the boundary condition (2) then
\[
\varphi^{^{\prime}}(1,\lambda)=e^{it},\text{ }\theta(1,\lambda)=e^{it}%
\]
which contradicts (3) for $t\neq0,\pi.$ Therefore Proposition 1 at once
implies the following

\begin{proposition}
A number $\lambda\in\sigma(L_{t}(q))$ for $t\in(0,\pi)$ is a spectral
singularity of $L(q)$ if and only if it is a multiple eigenvalue of $L_{t}%
(q)$. Moreover, $\Gamma_{n}$ does not contain the spectral singularities if
and only if there exists $\beta>0$ such that $\mid\alpha_{n}(t)\mid^{-1}%
<\beta$ for all $t\in(0,\pi).$
\end{proposition}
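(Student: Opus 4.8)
The plan is to read both assertions off Proposition 1 together with the observation, recorded immediately before the statement, that for $t\neq0,\pi$ the operator $L_{t}(q)$ cannot have two linearly independent eigenfunctions for one eigenvalue. For the first assertion I start from the last sentence of Proposition 1: for $t\in(0,\pi)$ the point $\lambda=\lambda_{n}(t)$ is a spectral singularity of $L(q)$ if and only if $L_{t}(q)$ has an associated function at $\lambda$. An associated (root) function exists precisely when the algebraic multiplicity of $\lambda$ strictly exceeds its geometric multiplicity. Since for $t\neq0,\pi$ the geometric multiplicity equals $1$ (two independent eigenfunctions would force $\varphi^{\prime}(1,\lambda)=\theta(1,\lambda)=e^{it}$, contradicting (3)), the existence of an associated function is equivalent to the algebraic multiplicity being at least $2$, that is, to $\lambda$ being a multiple eigenvalue of $L_{t}(q)$. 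This is exactly the first equivalence.

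For the second assertion I would first translate multiplicity into the vanishing of $\alpha_{n}$. For $t\in(0,\pi)$ the eigenvalue $\lambda_{n}(t)$ is simple if and only if the eigenfunction $\Psi_{n,t}$ and the adjoint eigenfunction $\Psi_{n,t}^{\ast}$ have nonzero pairing $\alpha_{n}(t)=(\Psi_{n,t},\Psi_{n,t}^{\ast})$; when a Jordan block occurs the eigenfunction is orthogonal to the adjoint eigenfunction and $\alpha_{n}(t)=0$. The same vanishing can be cross-checked from (29), where $dF/d\lambda$ is expressed through $(\Phi_{+},\overline{\Phi_{-}})$, which is proportional to $\alpha_{n}(t)$ by (26)--(27). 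Hence, for $t\in(0,\pi)$, $\lambda_{n}(t)$ is a spectral singularity if and only if $\alpha_{n}(t)=0$, that is $|\alpha_{n}(t)|^{-1}=\infty$.

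Now I use the identity $\parallel P(\gamma)\parallel=\sup_{t\in\delta}|\alpha_{n}(t)|^{-1}$ of Proposition 1 to turn Definition 1 into a statement about $|\alpha_{n}|^{-1}$, parametrizing $\Gamma_{n}$ by $t\in[0,\pi]$ through $\lambda_{n}(-t)=\lambda_{n}(t)$. If $|\alpha_{n}(t)|^{-1}<\beta$ for all $t\in(0,\pi)$, then $|\alpha_{n}|\geq\beta^{-1}$ on $(0,\pi)$; since $\alpha_{n}$ is continuous on the compact parameter interval, the same bound holds at the endpoints, so $\parallel P(\gamma)\parallel=\sup_{\delta}|\alpha_{n}|^{-1}\leq\beta$ for every regular spectral arc $\gamma\subset\Gamma_{n}$, and no sequence of arcs near a point of $\Gamma_{n}$ can satisfy (6); thus $\Gamma_{n}$ contains no spectral singularity. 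Conversely, if $\sup_{t\in(0,\pi)}|\alpha_{n}(t)|^{-1}=\infty$, I would choose $t_{m}\in(0,\pi)$ with $|\alpha_{n}(t_{m})|^{-1}\to\infty$ and pass to a subsequence $t_{m}\to t_{0}\in[0,\pi]$. By continuity of $\lambda_{n}$ the values $\lambda_{n}(t_{m})$ accumulate at $\lambda_{n}(t_{0})\in\Gamma_{n}$, and on punctured neighbourhoods of $t_{0}$ the eigenvalues are simple, so one can form regular spectral arcs $\gamma_{m}$ lying in arbitrarily small discs about $\lambda_{n}(t_{0})$ with $\parallel P(\gamma_{m})\parallel\geq|\alpha_{n}(t_{m})|^{-1}\to\infty$. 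This exhibits $\lambda_{n}(t_{0})$ as a spectral singularity of $\Gamma_{n}$, completing the equivalence.

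The step I expect to be the main obstacle is this last construction of the arcs $\gamma_{m}$ from the divergent sequence $|\alpha_{n}(t_{m})|^{-1}$, because the accumulation parameter $t_{0}$ may be an endpoint $0$ or $\pi$, where the geometric-multiplicity-one fact fails and $\lambda_{n}(t_{0})$ may be a periodic or antiperiodic point with $dF/d\lambda=0$. There I must keep the arcs regular, hence bounded away from the degenerate point, while still letting them absorb the large values of $|\alpha_{n}|^{-1}$ and shrink into any given disc about $\lambda_{n}(t_{0})$; the continuity of $\lambda_{n}$ and the simplicity of the neighbouring eigenvalues are what make this possible.
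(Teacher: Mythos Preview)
Your proposal is correct and follows exactly the route the paper intends: the paper gives no separate proof of Proposition 2 at all, stating only that the geometric-multiplicity-one observation for $t\neq0,\pi$ together with Proposition 1 ``at once implies'' it. Your write-up simply unpacks this implication, and the endpoint issue you single out as the main obstacle is precisely the only step requiring any care; your resolution via (32) and short regular arcs around the simple eigenvalues $\lambda_{n}(t_{m})$ is the right one.
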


Now we are ready to prove the main results of this chapter.

\begin{theorem}
The following statements are equivalent

$(a)$ The operator $L(q)$ has no spectral singularity at infinity.

$(b)$ $L(q)$ is an asymptotically spectral operator.

$(c)$\ There exists $N$ such that the following are satisfied: $(i)$ The
operator $L(q)$ has no spectral singularity on $\Gamma_{n}$ for$\mid n\mid>N$
and hence $L(q)$ may have at most finitely many spectral singularities.
$(ii)$\ For $\left\vert n\right\vert >N$ and $t\in(0,\pi)$ the eigenvalues
$\lambda_{n}(t)$ are simple. $(iii)$ The algebraic multiplicity of
$\lambda_{n}(0)$ and $\lambda_{n}(\pi)$ for $\left\vert n\right\vert >N$ are
equal to their geometric multiplicities, that is, there are not associated
functions corresponding to those eigenvalues. $(iiii)$ There exists a constant
$d$ such that%
\begin{equation}
\mid\alpha_{n}(t)\mid^{-1}<d
\end{equation}
for all $\mid n\mid>N$ and $t\in(-\pi,0)\cup(0,\pi).$
\end{theorem}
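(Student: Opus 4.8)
The plan is to establish the cycle of implications $(a)\Rightarrow(c)\Rightarrow(b)\Rightarrow(a)$, leveraging the uniform asymptotic machinery developed in the preceding pages (formulae (10)--(19), the expansions (20)--(24), and the representation (30)) together with the characterization of projection norms in Propositions 1 and 2.

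First I would prove $(a)\Rightarrow(c)$ by contraposition. The key observation is the identity $\parallel P(\gamma)\parallel=\sup_{t\in\delta}\mid\alpha_{n}(t)\mid^{-1}$ from (33): a spectral singularity, an associated function, or an unbounded family of projections on the high-index curves $\Gamma_n$ all force $\mid\alpha_n(t)\mid^{-1}$ to blow up. If any of $(i)$--$(iiii)$ fails for every $N$, then for a sequence $\mid n_j\mid\to\infty$ we can find arcs $\gamma_{n_j}\subset\Gamma_{n_j}$ with $\parallel P(\gamma_{n_j})\parallel\to\infty$. Since $\lambda_{n_j}(t)=(2\pi n_j+t)^2+O(n_j^{-1}\ln\mid n_j\mid)$ by (10), these arcs satisfy $d(0,\gamma_{n_j})\to\infty$, so they witness a spectral singularity at infinity, contradicting $(a)$. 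Here Proposition 2 converts each failure mode ($(i)$, $(ii)$) into unboundedness of $\mid\alpha_n(t)\mid^{-1}$, while $(iii)$ and $(iiii)$ are handled by the same projection-norm identity at the endpoints $t=0,\pi$ and on the open intervals respectively.

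Next I would prove $(c)\Rightarrow(b)$, the substantive direction. By McGarvey's criterion in the asymptotic form of Definition 3, I must bound $\operatorname{ess\,sup}_{t}\parallel e(t,\gamma)\parallel$ uniformly over $\gamma\in R(C)$ for a suitable $C$. The ring $R(C)$ consists of finite unions of rectangles in $\{\mid\lambda\mid>C\}$; choosing $C$ large enough that every such rectangle meets only curves $\Gamma_n$ with $\mid n\mid>N$, I reduce the spectral projection $e(t,\gamma)$ to a finite sum of the rank-one projections $P_t(\lambda_n(t))$ whose norms equal $\mid\alpha_n(t)\mid^{-1}<d$ by (32) and $(iiii)$. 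The delicate point is that a rectangle may enclose the two eigenvalues $\lambda_{n,1}(t),\lambda_{n,2}(t)$ sitting in a single disk $U(n,t,\rho)$ for $t$ near $0$ or $\pi$; there the individual projections need not be bounded, but their sum is controlled because $(iii)$ guarantees no associated functions at the endpoints, so the total projection onto the two-dimensional invariant subspace is uniformly bounded even as the eigenvalues collide. Summing the finitely many rectangle-boundaries and using the uniform decay $\parallel h_{n,t}\parallel=O(n^{-1})$ from (21), (25) to control cross terms yields the required uniform bound.

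Finally $(b)\Rightarrow(a)$ is immediate: if $L(q)$ has a spectral singularity at infinity, there are regular arcs $\gamma_n$ with $d(0,\gamma_n)\to\infty$ and $\parallel P(\gamma_n)\parallel\to\infty$; these lie in rectangles belonging to $R(C)$ for every $C$, so $\sup_{\gamma\in R(C)}\operatorname{ess\,sup}_t\parallel e(t,\gamma)\parallel=\infty$, violating asymptotic spectrality. I expect the main obstacle to be the endpoint analysis at $t=0,\pi$ in the direction $(c)\Rightarrow(b)$: away from the endpoints the eigenvalues are simple and the projection norms reduce cleanly to $\mid\alpha_n(t)\mid^{-1}$, but near $t=0,\pi$ the two eigenvalues in $U(n,t,\rho)$ may merge, and one must argue carefully that condition $(iii)$ (absence of associated functions at the endpoints) together with continuity in $t$ keeps the combined spectral projection uniformly bounded through the collision.
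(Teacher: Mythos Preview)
Your overall scheme matches the paper's: the implications $(a)\Rightarrow(c)$ and $(b)\Rightarrow(a)$ go exactly as you describe, and for $(c)\Rightarrow(b)$ (the paper actually proves $(a)\Rightarrow(b)$ via (33), which amounts to the same thing) one chooses $C$ large enough that only eigenvalues with $|n|>N$ lie in $\{|\lambda|>C\}$ and then bounds $e(t,\gamma)$ uniformly.

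However, you misidentify the crux of the $(c)\Rightarrow(b)$ step. The endpoint collision at $t=0,\pi$ is \emph{not} an obstacle: the supremum in Definition~3 is an essential supremum over $t$, so the two exceptional values may simply be dropped, and for $t\in(-\pi,0)\cup(0,\pi)$ condition $(iiii)$ already bounds every individual projection norm $|\alpha_n(t)|^{-1}$ by $d$ uniformly, without any appeal to $(iii)$. The genuine difficulty is the one you dismiss in a single clause about ``cross terms'': a region $\gamma\in R(C)$ may enclose \emph{arbitrarily many} eigenvalues $\lambda_n(t)$, so that
\[
e(t,\gamma)f=\sum_{n\in J(t,\gamma)}\frac{1}{\alpha_n(t)}(f,\Psi_{n,t}^{\ast})\Psi_{n,t}
\]
is a sum over a set $J(t,\gamma)$ of unbounded cardinality, and the trivial estimate $d\cdot|J(t,\gamma)|$ is useless. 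The paper isolates this as a separate Lemma: if (33) holds then there is a constant $D$ with $\|\sum_{n\in J}\alpha_n(t)^{-1}(f,\Psi_{n,t}^{\ast})\Psi_{n,t}\|^2\le D\|f\|^2$ for \emph{every} subset $J\subset\{|n|>N\}$ and every $t\in(-\pi,0)\cup(0,\pi)$. The proof splits each $\Psi_{n,t}$ via (20) into at most four exponentials $e^{i(2\pi k+t)x}$, $k\in\{\pm n,\pm(n+1)\}$, plus the remainder $h_{n,t}$; orthogonality of the exponentials (each occurring for at most finitely many $n$) gives a Bessel-type bound on the main part, while $\|h_{n,t}\|=O(n^{-1})$ together with Cauchy--Schwarz in $\ell_2$ sums the remainders. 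The dual inequality $\sum_{|n|>N}|(f,\Psi_{n,t}^{\ast})|^2\le c_2\|f\|^2$ is obtained by the same decomposition applied to $\Psi_{n,t}^{\ast}$ via (23)--(24). This Bessel-type argument, not the endpoint analysis, is the substantive content you need to supply.
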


\begin{proof}
\ First let us show that $(c)$ implies $(a).$ From (32) and (33) it follows
that  $\parallel P(\gamma)\parallel<d$ for all regular spectral arcs
$\gamma\in\Gamma_{n}$ whenever$\mid n\mid>N$ $.$ Therefore, by Definition 2,
$(a)$ holds.

Now we prove that $(a)$ implies $(c).$ Suppose that $(a)$ holds. If $(i)$ does
not hold then there exist a sequence of pairs $\left\{  (n_{k},t_{k})\right\}
$ such that $\left\vert n_{k}\right\vert \rightarrow\infty$ and $\lambda
_{n_{k}}(t_{k})$ for $k=1,2,...,$ are spectral singulatities of $L(q).$ Then
by Definition 1 one can choose a sequence of regular arc $\gamma_{n_{k}%
}\subset\Gamma_{n_{k}}$ such that
\begin{equation}
\parallel P(\gamma_{n_{k}})\parallel>k
\end{equation}
which contradicts $(a)$ (see Definition 2). To complete the proof \ of $(i)$
it remains to note that the spectral singularities of $L(q)$ are contained in
the set
\begin{equation}
\{\lambda:\frac{dF(\lambda)}{d\lambda}=0,\text{ }\lambda\in\sigma(L(q))\}
\end{equation}
and the entire function $\frac{dF(\lambda)}{d\lambda}$ has at most finite
number of roots on the compact set $\cup_{|n|\leq N}\Gamma_{n}.$

If $(ii)$ does not hold then there exist a sequence of pairs $\left\{
(n_{k},t_{k})\right\}  $ such that $t_{k}\in(0,\pi)$ and $\lambda_{n_{k}%
}(t_{k})$ is a multiple eigenvalue. Then by Proposition 2 the numbers
$\lambda_{n_{k}}(t_{k})$ for $k=1,2,...,$ are spectral singulatities of $L(q)$
which contradicts $(i).$

Similarly, if $(iii)$ does not hold then there exist infinitely many
associated functions and hence by Proposition 1 there exist infinitely many
spectral singulatities which again contradicts $(i).$

If $(iiii)$ does not hold then there exist a sequence of pairs $\left\{
(n_{k},t_{k})\right\}  ,$ where $\left\vert n_{k}\right\vert \rightarrow
\infty$ and $t_{k}\in(-\pi,0)\cup(0,\pi),$ such that%
\[
\left\vert \alpha_{n_{k}}(t_{k})\right\vert ^{-1}>k.
\]
Moreover by $(ii)$ the eigenvalues $\lambda_{n_{k}}(t_{k})$ for $\left\vert
n_{k}\right\vert >N$ are simple. Therefore using the continuity of
$\alpha_{n_{k}}(t)$ at $t_{k}$ and (32) we obtain that there exists a sequence
of regular arc $\gamma_{n_{k}}\subset\Gamma_{n_{k}}$ such that $\parallel
P(\gamma_{n_{k}})\parallel\rightarrow\infty$ which contradicts $(a)$ (see
Definition 2).

Now we prove that $(a)$ and $(b)$ are equivalent. If $(a)$ does not hold then
it is clear that $(b)$ also does not holds. Suppose that $(a)$ holds. Then
(33) holds too. Let $C$ be a positive constant such that if $\lambda_{n}%
(t)\in\{\lambda\in\mathbb{C}:\mid\lambda\mid>C\},$ then $\mid n\mid>N$ for all
$t\in(-\pi,\pi],$ where $N$ is defined in $(c)$. If $\gamma\in R(C),$ then
$\gamma$ encloses finite number \ of the simple eigenvalues of \ $L_{t}(q).$
Thus, there exists a finite subset $J(t,\gamma)$ of $\{n\in\mathbb{Z}$: $\mid
n\mid>N\}$ such that the eigenvalue $\lambda_{k}(t)$ lies inside $\gamma$ if
and only if $k\in J(t,\gamma).$ It is well-known that these eigenvalues are
the simple poles of the Green function of $L_{t}(q)$ and the projection
$e(t,\gamma)$ has the form
\begin{equation}
e(t,\gamma)f=\sum_{n\in J(t,\gamma)}\frac{1}{\alpha_{n}(t)}(f,\Psi_{n,t}%
^{\ast})\Psi_{n,t}.
\end{equation}
Therefore the proof of the theorem follows from the following lemma
\end{proof}

\begin{lemma}
If (33) holds then there exists a positive constant $D$ such that
\begin{equation}
\parallel\sum_{n\in J}\frac{1}{\alpha_{n}(t)}(f,\Psi_{n,t}^{\ast})\Psi
_{n,t}\parallel^{2}<D\parallel f\parallel^{2}%
\end{equation}
for all $t\in(-\pi,0)\cup(0,\pi)$ and for all subset $J$ of $\left\{
n\in\mathbb{Z}:\mid n\mid>N\right\}  .$
\end{lemma}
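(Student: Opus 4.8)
The plan is to exploit the asymptotic decompositions (20)--(24), which express each normalized eigenfunction and its adjoint as a four-term exponential sum plus an $\ell^{2}$-small tail, and to feed these into the hypothesis (33). Throughout I would fix $t\in(-\pi,0)\cup(0,\pi)$ and, writing $e_{k}(x)=e^{i(2\pi k+t)x}$, use that $\{e_{k}:k\in\mathbb{Z}\}$ is an orthonormal basis of $L_{2}[0,1]$, so that $f=\sum_{k}\widehat{f}_{k}e_{k}$ with $\sum_{k}|\widehat{f}_{k}|^{2}=\|f\|^{2}$ and $u_{n,k}(t)=(\Psi_{n,t},e_{k})$ satisfies $\sum_{k}|u_{n,k}(t)|^{2}=\|\Psi_{n,t}\|^{2}=1$ (and similarly for the starred quantities). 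Decompose $\Psi_{n,t}=g_{n}+h_{n,t}$ and $\Psi_{n,t}^{\ast}=g_{n}^{\ast}+h_{n,t}^{\ast}$, where $g_{n}=\sum_{k\in\{\pm n,\pm(n+1)\}}u_{n,k}(t)e_{k}$ and $g_{n}^{\ast}$ is the analogous four-term sum. The decisive combinatorial fact is that each frequency $k$ lies in $\{\pm n,\pm(n+1)\}$ for at most four values of $n$ (namely $n\in\{k,-k,k-1,-k-1\}$); this bounded overlap, not any genuine orthogonality of the $\Psi_{n,t}$, is what makes all the estimates uniform in the subset $J$.

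First I would show that $\{\Psi_{n,t}^{\ast}:|n|>N\}$ is a Bessel system with a constant independent of $t$. Since $(f,g_{n}^{\ast})=\sum_{k\in\{\pm n,\pm(n+1)\}}\overline{u_{n,k}^{\ast}(t)}\,\widehat{f}_{k}$, Cauchy--Schwarz on this four-term sum together with $\sum_{k}|u_{n,k}^{\ast}(t)|^{2}\le1$ gives $|(f,g_{n}^{\ast})|^{2}\le\sum_{k\in\{\pm n,\pm(n+1)\}}|\widehat{f}_{k}|^{2}$; summing over $n$ and interchanging the order, the bounded-overlap property yields $\sum_{n}|(f,g_{n}^{\ast})|^{2}\le4\|f\|^{2}$. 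The tails contribute $\sum_{n}|(f,h_{n,t}^{\ast})|^{2}\le\|f\|^{2}\sum_{n}\|h_{n,t}^{\ast}\|^{2}=O(1)\|f\|^{2}$ by (24). Hence $\sum_{|n|>N}|(f,\Psi_{n,t}^{\ast})|^{2}\le B\|f\|^{2}$ with $B$ uniform in $t$. Setting $c_{n}:=\alpha_{n}(t)^{-1}(f,\Psi_{n,t}^{\ast})$ and invoking (33), I obtain the key coefficient bound $\sum_{n\in J}|c_{n}|^{2}\le d^{2}\sum_{|n|>N}|(f,\Psi_{n,t}^{\ast})|^{2}\le d^{2}B\|f\|^{2}$.

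It then remains to bound $\|\sum_{n\in J}c_{n}\Psi_{n,t}\|$. Splitting $\Psi_{n,t}=g_{n}+h_{n,t}$, the tail term obeys $\|\sum_{n\in J}c_{n}h_{n,t}\|\le(\sum_{n\in J}|c_{n}|^{2})^{1/2}(\sum_{n}\|h_{n,t}\|^{2})^{1/2}=O(1)\|f\|$ by Cauchy--Schwarz and (21). For the main term I would regroup by frequency, $\sum_{n\in J}c_{n}g_{n}=\sum_{k}\big(\sum_{n\in J:\,k\in\{\pm n,\pm(n+1)\}}c_{n}u_{n,k}(t)\big)e_{k}$, so that orthonormality of the $e_{k}$ gives $\|\sum_{n\in J}c_{n}g_{n}\|^{2}=\sum_{k}|\sum_{n}c_{n}u_{n,k}(t)|^{2}$; applying Cauchy--Schwarz to each at-most-four-term inner sum and using $\sum_{k}|u_{n,k}(t)|^{2}\le1$ bounds this by $4\sum_{n\in J}|c_{n}|^{2}\le4d^{2}B\|f\|^{2}$. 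Combining the two pieces establishes (37) with a constant $D$ depending only on $d$, $B$, and the implied constants in (21) and (24).

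I expect the only delicate point to be the bounded-overlap bookkeeping for the overlapping blocks $\{\pm n,\pm(n+1)\}$: it is precisely this finite multiplicity that lets me pass from the four-term Bessel estimates to bounds that are uniform over every subset $J$, while the uniformity in $t$ is then inherited directly from the uniformity of the $O$-estimates in (21) and (24) and of the constant $d$ in (33). The remaining steps are the two routine Cauchy--Schwarz inequalities, which I would not spell out in detail.
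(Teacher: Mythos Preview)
Your proposal is correct and follows essentially the same route as the paper: both split $\Psi_{n,t}$ and $\Psi_{n,t}^{\ast}$ into the four-term exponential block plus the $O(n^{-1})$ tail, establish the Bessel-type bound $\sum_{|n|>N}|(f,\Psi_{n,t}^{\ast})|^{2}\le c\|f\|^{2}$, and then control the synthesis $\|\sum_{n\in J}c_{n}\Psi_{n,t}\|$ by the same decomposition together with (33). The one expository difference is that you make explicit the bounded-overlap fact (each frequency $k$ lies in at most four blocks $\{\pm n,\pm(n+1)\}$), which is exactly what underlies the paper's constant $16d^{2}$ in its estimate for $S_{1}$ but is left to the reader there.
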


\begin{proof}
First let us prove that there exists a positive constant $c$ such that%
\begin{equation}
\parallel\sum_{n\in J}\frac{1}{\alpha_{n}(t)}(f,\Psi_{n,t}^{\ast})\Psi
_{n,t}\parallel^{2}<c\sum_{n:\mid n\mid>N}\mid(f,\Psi_{n,t}^{\ast})\mid^{2}%
\end{equation}
for all $t\in(-\pi,0)\cup(0,\pi).$ By (20) and (21) we have%
\begin{equation}
\parallel\sum_{n\in J}\frac{1}{\alpha_{n}(t)}(f,\Psi_{n,t}^{\ast})\Psi
_{n,t}\parallel^{2}=S_{1}+S_{2}^{2}%
\end{equation}
where
\[
S_{1}=\parallel\sum_{n\in J}\frac{1}{\alpha_{n}(t)}(f,\Psi_{n,t}^{\ast}%
)(\sum_{k\in\left\{  \pm n,\pm(n+1)\right\}  }u_{n,k}(t)e^{i(2\pi
k+t)x})\parallel^{2},
\]%
\[
S_{2}=\parallel\sum_{n\in J}\frac{1}{\alpha_{n}(t)}(f,\Psi_{n,t}^{\ast
})h_{n,t}\parallel
\]
Since $\left\{  e^{i(2\pi n+t)x}:n\in\mathbb{Z}\right\}  $ is an orthonormal
basis and $\left\vert u_{n,k}(t)\right\vert \leq1$ (see (22)) using (33) and
the Bessel inequality one can easily verify that
\begin{equation}
S_{1}\leq16d^{2}\sum_{n:\mid n\mid>N}\mid(f,\Psi_{n,t}^{\ast})\mid^{2}%
\end{equation}
It follows from (33) and (21) that
\[
S_{2}<c_{1}\sum_{n:\mid n\mid>N}\mid(f,\Psi_{n,t}^{\ast})\mid\frac{1}{\mid
n\mid}%
\]
for some constant $c_{1}.$ Now using the Schwarz inequality for $l_{2}$ we
obtain%
\[
S_{2}<c_{1}\left(  \sum_{n:\mid n\mid>N}\mid(f,\Psi_{n,t}^{\ast})\mid
^{2}\right)  ^{1/2}\left(  \sum_{n:\mid n\mid>N}\frac{1}{n^{2}}\right)
^{1/2}\leq\frac{c_{1}}{\sqrt{N}}\left(  \sum_{n:\mid n\mid>N}\mid(f,\Psi
_{n,t}^{\ast})\mid^{2}\right)  ^{1/2}.
\]
Thus (38) follows from (39) and (40). Therefore to prove (37) it is enough
show that there exists a positive constant $c_{2}$ such that
\begin{equation}
\sum_{n:\mid n\mid>N}\mid(f,\Psi_{n,t}^{\ast})\mid^{2}\leq c_{2}\parallel
f\parallel^{2}.
\end{equation}
It can be proved arguing as in the proof of (38) and using (23) instead of (20)
\end{proof}

\begin{theorem}
The operator $L(q)$ is a spectral operator if and only if it has no spectral
singularities at $\sigma(L(q))$ and \ at infinity.
\end{theorem}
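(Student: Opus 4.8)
The plan is to route everything through McGarvey's criterion (7) and to let the already-proved Theorem 1 absorb the behaviour at infinity, so that only the finitely many curves $\Gamma_n$ with $\mid n\mid\leq N$ remain to be controlled. The organising observation is that the two hypotheses of the theorem decouple: the absence of a spectral singularity at infinity is, by Theorem 1, exactly the tail bound (33) on $\mid\alpha_n(t)\mid^{-1}$ for $\mid n\mid>N$, while the absence of finitely located spectral singularities controls the indices $\mid n\mid\leq N$ through Proposition 2. The bridge between the two quantities appearing in (7) and the numbers $\alpha_n(t)$ is that for a rectangle $\gamma$ enclosing a single simple eigenvalue $\lambda_m(t)$ the formula (36), together with $\parallel\Psi_{m,t}\parallel=\parallel\Psi_{m,t}^{\ast}\parallel=1$, gives $\parallel e(t,\gamma)\parallel=\mid\alpha_m(t)\mid^{-1}$, i.e. (31). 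Thus the entire theorem should reduce to the single assertion that (7) holds if and only if $\mid\alpha_n(t)\mid^{-1}$ is bounded uniformly over all $n\in\mathbb{Z}$ and all $t\in(-\pi,0)\cup(0,\pi)$.

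For sufficiency I would assume there is no spectral singularity, finite or at infinity. The hypothesis at infinity gives, via Theorem 1, the bound (33) with its constant $d$ for all $\mid n\mid>N$ and $t\neq0,\pi$. For each of the finitely many indices $\mid n\mid\leq N$, the hypothesis that $\Gamma_n$ carries no spectral singularity lets me invoke the second part of Proposition 2 to get $\mid\alpha_n(t)\mid^{-1}<\beta_n$ on $(0,\pi)$, and analogously on $(-\pi,0)$; taking the maximum of the $\beta_n$ together with $d$ produces one constant $d'$ with $\mid\alpha_n(t)\mid^{-1}<d'$ for every $n$ and every $t\neq0,\pi$. Since by Proposition 2 every eigenvalue is simple for $t\in(0,\pi)$ (and likewise on $(-\pi,0)$), for almost every $t$ the projection $e(t,\gamma)$ of an arbitrary $\gamma\in R$ is the finite sum (36) over $J(t,\gamma)$. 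Splitting $J(t,\gamma)$ into $\{\mid n\mid\leq N\}$ and $\{\mid n\mid>N\}$, the tail part has norm at most $\sqrt{D}\parallel f\parallel$ by Lemma 1 (whose hypothesis (33) now holds), while the head part is a sum of at most $2N+1$ rank-one terms each of norm $\leq d'\parallel f\parallel$. Hence $\parallel e(t,\gamma)\parallel\leq\sqrt{D}+(2N+1)d'$ uniformly in $\gamma\in R$ and in $t\neq0,\pi$, which is (7), and by McGarvey's theorem $L(q)$ is spectral.

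For necessity I would assume $L(q)$ spectral, so (7) holds. Restricting the supremum in (7) to the subring $R(C)\subset R$ at once yields Definition 3, so by Theorem 1 there is no spectral singularity at infinity. To exclude a finite spectral singularity $\lambda_0=\lambda_m(t_0)$ I argue by contradiction: by Definition 1 and (32) such a point forces $\mid\alpha_m(t)\mid^{-1}$ to be unbounded as $t\to t_0$. The plan is then to manufacture, for each $k$, a witness rectangle. I would pick $t_k\to t_0$ with $t_k\neq t_0$ and $\mid\alpha_m(t_k)\mid^{-1}>k$; since $\lambda_m(t)$ is simple on a punctured neighbourhood of $t_0$, I enclose $\lambda_m(t_k)$ in a small rectangle $\gamma_k\in R$ excluding all other eigenvalues, and use the continuity of $\lambda_m(\cdot),\Psi_{m,\cdot},\Psi_{m,\cdot}^{\ast}$ to conclude that $\lambda_m(t)$ stays the only eigenvalue in $\gamma_k$ and $\mid\alpha_m(t)\mid^{-1}>k$ for $t$ in a whole subinterval about $t_k$. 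On that subinterval $\parallel e(t,\gamma_k)\parallel=\mid\alpha_m(t)\mid^{-1}>k$, so $ess\sup_{t}\parallel e(t,\gamma_k)\parallel>k$ and hence $\sup_{\gamma\in R}(ess\sup_{t}\parallel e(t,\gamma)\parallel)=\infty$, contradicting (7). The endpoints $t_0\in\{0,\pi\}$ are treated identically, approaching $t_0$ from inside $(0,\pi)$, since by Proposition 1 a singularity there is again detected by the blow-up of $\mid\alpha_m(t)\mid^{-1}$.

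I expect this last step to be the main obstacle. McGarvey's criterion (7) controls, for each fixed rectangle, the essential supremum over $t$ of a projection that may enclose several eigenvalues simultaneously, and near a coalescence of two eigenvalues (which is precisely what a finite spectral singularity at $t_0\in(0,\pi)$ is, by Proposition 2) the two-dimensional projection onto their combined span stays bounded even though each individual rank-one projection blows up. The point to get right is therefore that one must not use a single fixed rectangle but a sequence of shrinking rectangles $\gamma_k$, each isolating the single eigenvalue $\lambda_m(t)$ on a positive-measure set of $t$ lying strictly to one side of $t_0$; this is legitimate exactly because the outer supremum in (7) ranges over all of $R$, and it is here that the continuity of $\alpha_m(t)$ for $t\neq t_0$ and the simplicity of $\lambda_m(t)$ off the coalescence point must be used with care.
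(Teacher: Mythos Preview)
Your proposal is correct and follows essentially the same route as the paper: both directions are reduced to the uniform bound $|\alpha_n(t)|^{-1}<d$ for all $n\in\mathbb{Z}$ and $t\neq0,\pi$, with McGarvey's criterion (7) and Lemma~1 doing the work. The paper's own proof is much terser---for necessity it simply cites (7), (30), and Proposition~1 to bound $\|P(\gamma)\|$ uniformly over all regular arcs and invokes Definitions~1 and~2, and for sufficiency it asserts that (33) then holds for every $n$ and reuses the $(a)\Rightarrow(b)$ argument verbatim---so your head/tail split and the explicit construction of the shrinking witness rectangles $\gamma_k$ are exactly the details the paper suppresses.
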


\begin{proof}
If $L(q)$ is a spectral operator, then by (7), (30) and Proposition 1 there
exists a positive constant $c_{3}$ such that $\parallel P(\gamma
)\parallel<c_{3}$ for all regular spectral arcs $\gamma.$ Therefore, by
Definition 1 and Definition 2, the operator $L(q)$ has no spectral
singularities at $\sigma(L(q))$ and \ at infinity. If $L(q)$ has no spectral
singularities at $\sigma(L(q))$ and \ at infinity then (33) holds for all
$n\in\mathbb{Z}$ and $t\in(-\pi,0)\cup(0,\pi).$ Using this and arguing as in
the proof of the implication $(a)\Longrightarrow(b)$ we obtain that $L(q)$ is
a spectral operator
\end{proof}

\begin{definition}
The component $\Gamma_{n}$ ,defined by (14), of the spectrum $\sigma(L(q))$ of
the operator $L(q)$ is said to be separated if $\Gamma_{n}\cap\Gamma
_{m}=\emptyset$ for all $m\neq n.$ Thus all component $\Gamma_{n}$ of the
spectrum $\sigma(L(q))$ are separated if and only if \ all eigenvalues of the
operators $L_{t}$ for $t\in(-\pi,\pi]$ are simple.
\end{definition}

\begin{corollary}
The Mathieu operator $L(2a\cos x),$ where $a$ is a complex number, is a
spectral operator if and only if
\begin{equation}
\inf_{q,p\in\mathbb{N}}\{\mid2q\alpha-(2p-1)\mid\}\neq0,
\end{equation}
\ and all eigenvalues of the operators $L_{t}$ for $t\in(-\pi,\pi]$ are simple.
\end{corollary}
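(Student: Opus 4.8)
The plan is to obtain the corollary as the combination of Theorem 2 with the asymptotic result of [13] and a multiplicity analysis of the boundary eigenvalues. First I would observe that the Mathieu potential $2a\cos x$ is the special case of the potential (8) in which the two Fourier coefficients coincide. Consequently the equal-modulus requirement $\mid a\mid=\mid b\mid$ in (9) is automatically satisfied, and since $\arg(ab)=2\arg a$ the remaining part of (9) collapses to the stated condition $\inf_{q,p\in\mathbb{N}}\{\mid 2q\alpha-(2p-1)\mid\}\neq0$, the factor $2$ being exactly this doubling of the argument when $\alpha=\pi^{-1}\arg a$. Invoking the result proved in [13], this condition is then equivalent to the absence of a spectral singularity of $L(2a\cos x)$ at infinity.

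Next I would isolate the finite spectral singularities. By Theorem 2, $L$ is a spectral operator precisely when it has no spectral singularity in $\sigma(L(q))$ and none at infinity; the second clause having just been identified with the $\alpha$-condition, it remains to show that $L$ has no spectral singularity in $\sigma(L(q))$ if and only if every eigenvalue of $L_t$, $t\in(-\pi,\pi]$, is simple. One implication is immediate from Proposition 1: if all eigenvalues are simple there are no associated functions, hence no spectral singularities (equivalently, all components $\Gamma_n$ are separated in the sense of Definition 4). For the converse I would treat the interior and the boundary values of $t$ separately.

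For $t\in(0,\pi)$ Proposition 2 gives the equivalence directly: a point of $\sigma(L_t)$ is a spectral singularity exactly when it is a multiple eigenvalue, so their absence forces simplicity there, and the symmetry $\lambda_n(-t)=\lambda_n(t)$ extends this to $(-\pi,0)$. The essential extra ingredient, and the step I expect to be the main obstacle, is the boundary case $t=0,\pi$, where a priori a double eigenvalue could be semisimple (two independent Floquet solutions) and therefore carry no associated function, so that Proposition 1 would not detect it. Here I would invoke Ince's non-coexistence theorem for the Mathieu equation, which asserts that this equation never possesses two linearly independent periodic or antiperiodic solutions. Thus at $t=0,\pi$ every double eigenvalue has geometric multiplicity one and hence an associated function, so by Proposition 1 it is a spectral singularity; the absence of spectral singularities therefore rules out multiple eigenvalues at the endpoints as well, and every eigenvalue of $L_t$ is simple.

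Finally I would assemble the pieces: $L(2a\cos x)$ is a spectral operator if and only if it has no spectral singularity in $\sigma(L(q))$ and none at infinity (Theorem 2), if and only if all eigenvalues of $L_t$ for $t\in(-\pi,\pi]$ are simple (the multiplicity analysis above) and the condition $\inf_{q,p\in\mathbb{N}}\{\mid 2q\alpha-(2p-1)\mid\}\neq0$ holds (the reduction of (9) via [13]). This is exactly the assertion of the corollary. The only genuinely Mathieu-specific input is Ince's theorem; everything else is the general machinery of the preceding sections.
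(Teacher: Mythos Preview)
Your proposal is correct and follows essentially the same route as the paper: both directions are obtained from Theorem~2 together with the result of [13] identifying (42) with the absence of a spectral singularity at infinity, Proposition~2 handles $t\in(0,\pi)$, and the endpoint case $t=0,\pi$ is settled by the fact that the Mathieu equation never has two independent (anti)periodic solutions, so any multiple periodic/antiperiodic eigenvalue must carry an associated function and hence be a spectral singularity by Proposition~1. The only cosmetic difference is that you invoke this last fact as Ince's non-coexistence theorem, whereas the paper cites pp.~34--35 of Eastham~[1] (noting the proof there extends verbatim to complex $a$); these are the same statement.
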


\begin{proof}
Using the result of [13] mentioned in the end of introduction which states
that $L(2a\cos x)$ has no spectral singularities\ at infinity if and only if
(42) holds, we prove the corollary as follows. If (42) holds and all
eigenvalues are simple then $L$ has no spectral singularity at infinity and in
spectrum and hence, by Theorem 2, $L$ is a spectral operator. Now suppose that
$L$ is a spectral operator. Then, by Theorem 2, $L$ has no spectral
singularities\ at infinity and hence (42) holds. It remains to prove that all
eigenvalues are simple. By Proposition 2 the eigenvalues $\lambda_{n}(t)$ for
$t\in(0,\pi)$ and for all $n$ are simple, since $L$ has no spectral
singularity in spectrum (see Theorem 2). Now to complete the proof of the
corollary it remains to prove that the eigenvalues $\lambda_{n}(0)$ and
$\lambda_{n}(\pi)$ for $n\in\mathbb{Z}$ are simple. By Proposition 1 the
operators $L_{0}$ and $L_{\pi}$ have no associated functions. On the other
hand, it is well-known that, the geometric multiplicity of $\lambda_{n}(0)$
and $\lambda_{n}(\pi)$ for all $n$ is $1$ (see p. 34-35 of [1]. Note that in
[1] this theorem was proved for the real $a$. However, the proof pass through
for the complex $a$ without any change). Therefore $\lambda_{n}(0)$ and
$\lambda_{n}(\pi)$ for all $n$ are simple
\end{proof}

\section{On the Spectral Expansion of the Asymptotically Spectral Hill
Operators}

In this section we examine the spectral expansion theorem in the case when
$L(q)$ has no spectral singularity at infinite by using Section 2 and the
results of [2] where it was proved that every function $f\in L_{2}%
(-\infty,\infty)$ can be represented in the form
\begin{equation}
f(x)=\frac{1}{2\pi}\int\limits_{0}^{2\pi}f_{t}(x)dt,
\end{equation}
where
\begin{equation}
f_{t}(x)=\sum\limits_{k=-\infty}^{\infty}f(x+k)e^{ikt},\text{ }\int_{-\infty
}^{\infty}\left\vert f(x)\right\vert ^{2}dx=\frac{1}{2\pi}\int\limits_{0}%
^{2\pi}\int\limits_{0}^{1}\left\vert f_{t}(x)\right\vert ^{2}dxdt
\end{equation}
and the followings hold%
\begin{equation}
f_{t}(x+1)=e^{it}f_{t}(x),\text{ }(f,\chi_{k,t})_{(-\infty,\infty).}%
=(f_{t},\chi_{k,t})_{(0,1)}=\frac{1}{\alpha_{k}(t)}(f_{t},\Psi_{k,t}^{\ast
})=:a_{k}(t).
\end{equation}
In this case, by Theorem 1, the root of $\frac{dF(\lambda)}{d\lambda}=0$ lying
in the set $\left\{  \lambda_{n}(t):t\in(0,\pi),\text{ }n\in\mathbb{Z}%
\right\}  $ is finite. Denote these roots (if exists) by $\lambda_{1}%
,\lambda_{2},...\lambda_{m}$. Let $t_{1},t_{2},...t_{m}$ be a point of
$(0,\pi)$ such that $\lambda_{k}\in\sigma(L_{t_{k}})$. Introduce the set
$E=(-\pi,\pi)\backslash\left\{  0,\pm t_{1},\pm t_{2},...\pm t_{m}\right\}  .$
By the definition of $E$, if $t\in E,$ then the eigenvalues $\lambda_{k}(t)$,
for all $k\in\mathbb{Z},$ are simple and the system $\left\{  \Psi
_{k,t}(x):k\in\mathbb{Z}\right\}  $ of eigenfunctions of $L_{t}$ forms a Riesz
basis in $L_{2}[0,1]$, since if $t\neq0,\pi$ then the system of the
eigenfunctions and associated functions of $L_{t}(q)$ with potential $q\in
L_{1}[0,1]$ forms, Riesz basis of $L_{2}[0,1]$ (see [11]). Therefore
\begin{equation}
f_{t}(x)=\sum\limits_{k=-\infty}^{\infty}a_{k}(t)\Psi_{k,t}(x),
\end{equation}
where the series converges in the norm of $L_{2}(0,1).$ This with (43) implies
that
\begin{equation}
f(x)=\frac{1}{2\pi}\int\limits_{-\pi}^{\pi}f_{t}(x)dt=\frac{1}{2\pi}%
\int\limits_{E}f_{t}(x)dt=\frac{1}{2\pi}\int\limits_{E}\sum\limits_{k=-\infty
}^{\infty}a_{k}(t)\Psi_{k,t}(x)dt
\end{equation}

Since in this section it is assumed that $L(q)$ has no spectral singularity at
infinite, by Theorem 1 there are at most finite number of spectral
singularities denoted by $\lambda_{1},\lambda_{2},...\lambda_{s},$ where
$s\geq m,$ since by Proposition 2, $\lambda_{1},\lambda_{2},...\lambda_{m}$.
are spectral singularities. Let $S$ \ be the set of integers such that
$\Gamma_{n}$ contains spectral singularities for $n\in S.$ Note that $S$ is a
finite subset of $\mathbb{Z}.$ By Proposition 2 and (45), (44) if $n\notin S$
then%
\begin{equation}
\int\limits_{0}^{2\pi}\int\limits_{0}^{1}\left\vert a_{k}(t)\Psi
_{k,t}(x)\right\vert ^{2}dxdt\leq\beta^{2}\int\limits_{0}^{2\pi}%
\int\limits_{0}^{1}\left\vert f_{t}(x)\right\vert ^{2}dxdt<\infty
\end{equation}
Therefore by Fubini theorem
\begin{equation}
\int\limits_{E}a_{k}(t)\Psi_{k,t}(x)dt
\end{equation}
exists for almost all $x$ when $k\notin S$. Similarly by (33) the integral
(49) exists for $\left\vert k\right\vert >N$.

\begin{theorem}
If the operator $L(q)$ has no spectral singularity at infinity, then every
function $f\in L_{2}(-\infty,\infty)$ has the spectral decomposition
\begin{equation}
f(x)=\frac{1}{2\pi}\int\limits_{E}\sum_{k\in S}a_{k}(t)\Psi_{k,t}%
(x)dt+\sum_{k\in\mathbb{Z}\backslash S}\frac{1}{2\pi}\int\limits_{E}%
a_{k}(t)\Psi_{k,t}(x)dt,
\end{equation}
where the series in (50) converges in the norm of $L_{2}(a,b)$ for every
$a,b\in\mathbb{R}.$
\end{theorem}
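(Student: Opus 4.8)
The plan is to start from the expansion (47), which already represents $f$ as $\tfrac{1}{2\pi}\int_E\sum_{k\in\mathbb Z}a_k(t)\Psi_{k,t}(x)\,dt$, and to split off the finitely many "singular" indices $k\in S$ from the rest. Since $S$ is finite, the block $\sum_{k\in S}a_k(t)\Psi_{k,t}$ may be kept under the integral sign, producing the first term of (50) verbatim. The entire content of the theorem then reduces to justifying the interchange of $\int_E$ with the infinite sum over $k\notin S$ and to proving that the resulting series $\sum_{k\in\mathbb Z\setminus S}\tfrac{1}{2\pi}\int_E a_k(t)\Psi_{k,t}(x)\,dt$ converges in $L_2(a,b)$.

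First I would set $g_k(x)=\tfrac{1}{2\pi}\int_E a_k(t)\Psi_{k,t}(x)\,dt$ and, for the partial sums, form the remainder $R_M(x)=\tfrac{1}{2\pi}\int_E\sum_{k\notin S,\,|k|>M}a_k(t)\Psi_{k,t}(x)\,dt$; pulling the finite block $|k|\le M$ out of the integral is legitimate, so establishing $\|R_M\|_{L_2(a,b)}\to0$ is exactly what is needed. The device for estimating $R_M$ is the quasi-periodicity relation (28), $\Psi_{k,t}(x+j)=e^{ijt}\Psi_{k,t}(x)$: writing $x=y+j$ with $y\in(0,1)$ gives $R_M(y+j)=\tfrac{1}{2\pi}\int_{-\pi}^{\pi}e^{ijt}G_M(t,y)\,dt$, where $G_M(t,\cdot)=\mathbf 1_E(t)\sum_{k\notin S,\,|k|>M}a_k(t)\Psi_{k,t}$ is the tail of the fibrewise Riesz expansion of $f_t$ on $(0,1)$. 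Thus the restrictions of $R_M$ to the unit cells $(j,j+1)$ are precisely the $L_2(0,1)$-valued Fourier coefficients of $t\mapsto G_M(t,\cdot)$, and the vector-valued Parseval identity, which is just (44) applied to $R_M$, yields $\sum_{j\in\mathbb Z}\|R_M\|_{L_2(j,j+1)}^2=\tfrac{1}{2\pi}\int_{-\pi}^{\pi}\|G_M(t,\cdot)\|_{L_2(0,1)}^2\,dt$.

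It then remains to show the right-hand side tends to $0$ as $M\to\infty$, and this is where the hypothesis enters. Because $L(q)$ has no spectral singularity at infinity, Theorem 1 gives the uniform bound (33) for $|k|>N$, while for the finitely many $k\notin S$ with $|k|\le N$ Proposition 2 supplies a bound on $|\alpha_k(t)|^{-1}$; hence the hypotheses of Lemma 1 hold for every tail $J=\{k\notin S:|k|>M\}$ once $M\ge N$. The Bessel-type estimate inside the proof of Lemma 1 (the $S_1,S_2$ splitting together with (38)--(41)) in fact bounds $\|G_M(t,\cdot)\|_{L_2(0,1)}^2$ by $c\sum_{k\notin S,\,|k|>M}|(f_t,\Psi_{k,t}^{\ast})|^2$ uniformly in $t$, and (41) shows the full series $\sum_{|k|>N}|(f_t,\Psi_{k,t}^{\ast})|^2$ is dominated by $c_2\|f_t\|_{L_2(0,1)}^2$, which is integrable in $t$ by (44). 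The integrand is therefore majorized by an integrable function and, being a tail of a convergent series, tends to $0$ for a.e.\ $t$, so dominated convergence forces $\int_{-\pi}^{\pi}\|G_M(t,\cdot)\|^2\,dt\to0$. Consequently $\sum_j\|R_M\|_{L_2(j,j+1)}^2\to0$; in particular $\|R_M\|_{L_2(j,j+1)}\to0$ for each fixed $j$, and since any bounded $(a,b)$ meets only finitely many cells this gives $\|R_M\|_{L_2(a,b)}\to0$, which is the asserted convergence.

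I expect the main obstacle to be the fibrewise bookkeeping in the second paragraph, namely verifying that the unit-cell restrictions of $R_M$ genuinely are the Fourier coefficients of $G_M$ so that Parseval applies, together with the measurability and Fubini justifications on $E$. Once this direct-integral structure is in place, the convergence is driven entirely by the uniform tail bound of Lemma 1, whose validity is precisely the analytic content of the no-spectral-singularity-at-infinity hypothesis. I would also remark that the argument in fact delivers convergence in $L_2(\mathbb R)$, the stated $L_2(a,b)$ conclusion following a fortiori.
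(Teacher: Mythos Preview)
Your proposal is correct and follows the same overall architecture as the paper: start from (47), split off the finite set $S$, and show that the tail of the series over $|k|>M$ tends to $0$ in $L_2$ by invoking the $S_1,S_2$ splitting of Lemma~1 together with the uniform bound (33) furnished by Theorem~1. The paper proceeds almost exactly as you outline.

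There are two genuine differences worth recording. First, to pass from the fibrewise tail $G_M(t,\cdot)$ to the norm of $R_M$, the paper does not invoke the Gelfand--Parseval identity (44) as you do; it uses the cruder pointwise Cauchy--Schwarz bound $\bigl|\int_E g(t)\,dt\bigr|^2\le 2\pi\int_E|g(t)|^2\,dt$ in $x$, then integrates over $x\in(-m,m)$ and applies Fubini, obtaining only $L_2(-m,m)$ control (this is (56) in the paper). Your direct-integral argument is cleaner and, as you note, actually yields convergence in $L_2(\mathbb{R})$, which is strictly stronger than the stated conclusion. Second, to show $\int_E\|G_M(t,\cdot)\|^2\,dt\to 0$, the paper does not use dominated convergence with the majorant (41); instead it proves the sharper tail estimate (54),
\[
\sum_{|k|>n}|(f_t,\Psi_{k,t}^{\ast})|^2\le c_5\sum_{|k|>n}|(f_t,e^{i(2\pi k+t)x})|^2+c\|f_t\|^2\,n^{-1},
\]
and then integrates in $t$, using the Parseval equality for the free operator $L(0)$ on the first term. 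Both routes work; the paper's is more explicit, yours more structural. The bookkeeping concern you flag (that the unit-cell restrictions of $R_M$ are the Fourier coefficients of $G_M$) is exactly the content of (43)--(45) and (52) in the paper, so no new ingredient is required there.
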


\begin{proof}
First let us consider the series
\begin{equation}
\sum_{k>N}a_{k}(t)\Psi_{k,t}(x),
\end{equation}
where $N$ is defined in Theorem 1 and $t\in E$. Let $R_{n}(x,t)$ be remainder
of (51)%
\[
R_{n}(x,t)=\sum_{k>n}a_{k}(t)\Psi_{k,t}(x),
\]
where $n>N.$ Since the series (51) converges in the norm of $L_{2}(0,1)$ by
(45) and (27) we have
\begin{equation}
R_{n}(x+1,t)=e^{it}R_{n}(x,t),\text{ }R_{n}(.,t)\in L_{2}(-m,m)
\end{equation}
for $t\in E$ and for all $m\in\mathbb{N}.$ Repearting the proof of (38) and
using (52) we obtain
\begin{equation}
\parallel R_{n}(.,t)\parallel_{(-m,m)}^{2}\leq2mc_{4}\sum_{k:\mid k\mid>n}%
\mid(f_{t},\Psi_{k,t}^{\ast})_{(0,1)}\mid^{2}%
\end{equation}
for some constant $c_{4},$ where $\parallel f\parallel_{(-m,m)}$ is the
$L_{2}(-m,m)$ norm of $f$. On the other hand, it follows from (33), (23) and
(24) that
\begin{equation}
\sum_{k:\mid k\mid>n}\mid(f_{t},\Psi_{k,t}^{\ast})_{(0,1)}\mid^{2}\leq
c_{5}\sum_{k:\mid k\mid>n}\mid(f_{t},e^{i(2\pi k+t)x})_{(0,1)}\mid
^{2}+c\parallel f_{t}\parallel_{(0,1)}^{2}n^{-1}%
\end{equation}
for some constant $c_{5}.$ Now using the Parseval equality for $L(q)$ when
$q=0$ (see [2]) we obtain
\[
\sum_{k\in\mathbb{Z}}\frac{1}{2\pi}\int\limits_{E}\mid(f_{t},e^{i(2\pi
k+t)x})_{(0,1)}\mid^{2}=\int_{-\infty}^{\infty}\left\vert f(x)\right\vert
^{2}dx=\frac{1}{2\pi}\int\limits_{E}\parallel f_{t}\parallel_{(0,1)}^{2}%
\]
Therefore by (54) and (53) the following integral exists and%
\begin{equation}
I_{n}=:\int\limits_{E}\int\limits_{(-m,m)}\mid R_{n}(x,t)\mid^{2}%
dxdt\rightarrow0
\end{equation}
as $n\rightarrow\infty.$ Thus by Fubini theorem $R_{n}(x,t)$ is integrable
with respect to $t$ for almost all $x.$

Now using the inequality
\[
\left\vert \int_{E}f(t)dt\right\vert ^{2}\leq2\pi\int_{E}\left\vert
f(t)\right\vert ^{2}dt,
\]
Fubini theorem and then (55), we obtain%
\begin{equation}
\parallel\int\limits_{E}\sum_{k>n}a_{k}(t)\Psi_{k,t}dt\parallel_{(-m,m)}%
^{2}\leq2\pi\int\limits_{(-m,m)}\int\limits_{E}\mid\sum_{k>n}a_{k}%
(t)\Psi_{k,t}(x)\mid^{2}dtdx=I_{n}\rightarrow0
\end{equation}
as $n\rightarrow\infty$. Hence the series (51) is integrable and
\begin{equation}
\int\limits_{E}(\sum_{k>N}a_{k}(t)\Psi_{k,t}(x))dt=\sum_{k>N}\int
\limits_{E}a_{k}(t)\Psi_{k,t}(x)dt,
\end{equation}
where the last series converges in the norm of $L_{2}(-m,m)$ for every
$m\in\mathbb{N}.$ In the same way we prove that
\begin{equation}
\int\limits_{E}(\sum_{k<-N}a_{k}(t)\Psi_{k,t}(x))dt=\sum_{k<-N}\int
\limits_{E}a_{k}(t)\Psi_{k,t}(x)dt.
\end{equation}
\ Therefore using (57), (58) and (47) and taking into account that the
integral in (49) exists for $k\notin S$ we get the proof of the theorem
\end{proof}

Now changing the variable to $\lambda$ in (50) by using (28) and taking into
account that $\lambda_{n}(-t)=\lambda_{n}(t)$ we obtain

\begin{theorem}
If the operator $L(q)$ has no spectral singularity at infinity, then every
function $f\in L_{2}(-\infty,\infty)$ has the spectral decomposition
\begin{equation}
f(x)=\frac{1}{\pi}\int\limits_{\gamma(s)}(\phi(x,\lambda))\frac{1}{p(\lambda
)}d\lambda+\frac{1}{\pi}\sum_{k\in\mathbb{Z}\backslash S}\left(
\int\limits_{\Gamma_{k}}(\phi(x,\lambda))\frac{1}{p(\lambda)}d\lambda\right)
,
\end{equation}
where
\begin{align*}
\phi(x,\lambda)  &  =\theta^{^{\prime}}(1,\lambda)h(\lambda)\varphi
(x,\lambda)+\frac{1}{2}(\theta(1,\lambda)-\varphi^{^{\prime}}(1,\lambda
))(h(\lambda)\theta(x,\lambda)+g(\lambda)\varphi(x,\lambda))-\\
&  \ -\varphi(1,\lambda)g(\lambda)\theta(x,\lambda),
\end{align*}%
\[
h(\lambda)=\int\limits_{-\infty}^{\infty}\varphi(x,\lambda)f(x)dx,\text{
}g(\lambda)=\int\limits_{-\infty}^{\infty}\theta(x,\lambda)f(x)dx\text{,
}p(\lambda)=\sqrt{4-F^{2}(\lambda)},
\]
$\gamma(s)=:\bigcup_{k\in S}\Gamma_{k}$ is the part of the spectrum that
contains the spectral singularities and the series in (59) converges in the
norm of $L_{2}(a,b)$ for every $a,b\in\mathbb{R}.$
\end{theorem}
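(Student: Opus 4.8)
The plan is to carry out the change of variable $t\mapsto\lambda=\lambda_k(t)$ announced just before the statement, turning each $t$-integral in (50) into a $\lambda$-integral over $\Gamma_k$, and then to perform the elementary algebra that collapses the Floquet-solution integrand into the closed form $\phi(x,\lambda)$. First I would express a single summand $\frac{1}{2\pi}\int_E a_k(t)\Psi_{k,t}(x)\,dt$ through the Floquet solutions $\Phi_{\pm}$. By (45), (25) and (27),
\[
a_k(t)=(f,\chi_{k,t})_{\mathbb{R}}=\frac{1}{\alpha_k(t)\,\mid\Phi_-\mid}F_-(\lambda,f),\qquad \Psi_{k,t}(x)=\frac{\Phi_+(x,\lambda)}{\mid\Phi_+\mid},
\]
so that $a_k(t)\Psi_{k,t}(x)=\left(\alpha_k(t)\mid\Phi_+\mid\,\mid\Phi_-\mid\right)^{-1}F_-(\lambda,f)\Phi_+(x,\lambda)$ with $\lambda=\lambda_k(t)$. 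The identity $\alpha_k(t)=\mid\Phi_+\mid^{-1}\mid\Phi_-\mid^{-1}(\Phi_+,\overline{\Phi_-})_{(0,1)}$, immediate from (25)--(27), makes the normalizing factors cancel, and the second relation in (28) then yields the clean formula
\[
a_k(t)\Psi_{k,t}(x)=-\varphi(1,\lambda)\left(\frac{dF}{d\lambda}\right)^{-1}F_-(\lambda,f)\,\Phi_+(x,\lambda).
\]

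Next I would apply the first relation in (28), namely $d\lambda=-p(\lambda)(dF/d\lambda)^{-1}dt$, which converts the measure into $a_k(t)\Psi_{k,t}(x)\,dt=\Phi_+(x,\lambda)F_-(\lambda,f)\frac{\varphi(1,\lambda)}{p(\lambda)}\,d\lambda$. Since $\Gamma_k=\{\lambda_k(t):t\in[0,\pi]\}$ by (14), the part $t\in(0,\pi)$ of $E$ produces $\frac{1}{2\pi}\int_{\Gamma_k}\Phi_+F_-\frac{\varphi(1,\lambda)}{p(\lambda)}d\lambda$. The symmetry $\lambda_k(-t)=\lambda_k(t)$ handles the remaining half $t\in(-\pi,0)$: there $e^{it}$ is replaced by $e^{-it}$ in the Floquet solution, interchanging $\Phi_+\leftrightarrow\Phi_-$ and $F_-\leftrightarrow F_+$, so this half contributes the companion term $\frac{1}{2\pi}\int_{\Gamma_k}\Phi_-F_+\frac{\varphi(1,\lambda)}{p(\lambda)}d\lambda$. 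Adding the two halves restores precisely the symmetric kernel of the projection formula (5), integrated over $\Gamma_k$.

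It then remains to verify the purely algebraic identity
\[
\phi(x,\lambda)=-\tfrac{1}{2}\varphi(1,\lambda)\left(\Phi_+(x,\lambda)F_-(\lambda,f)+\Phi_-(x,\lambda)F_+(\lambda,f)\right),
\]
which accounts simultaneously for the factor $\frac{1}{\pi}$ in (59) and for the stated form of $\phi$. For this I would substitute the Floquet multipliers $e^{\pm it}=\tfrac12(F(\lambda)\pm i\,p(\lambda))$ into the definition of $\Phi_{\pm}$ in (5), obtaining $\Phi_{\pm}=\theta(x,\lambda)+\varphi(1,\lambda)^{-1}\tfrac12(\varphi'(1,\lambda)-\theta(1,\lambda)\pm i\,p(\lambda))\varphi(x,\lambda)$ together with $F_{\pm}=g(\lambda)+\varphi(1,\lambda)^{-1}\tfrac12(\varphi'(1,\lambda)-\theta(1,\lambda)\pm i\,p(\lambda))h(\lambda)$, and then expand the symmetric product. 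The imaginary parts cancel; the coefficient of $\theta(x,\lambda)g$ is $2\varphi(1,\lambda)$ and that of $\theta(x,\lambda)h+\varphi(x,\lambda)g$ is $\varphi'(1,\lambda)-\theta(1,\lambda)$, reproducing the first two terms of $\phi$; and the coefficient of $\varphi(x,\lambda)h$ simplifies to $\theta'(1,\lambda)$ exactly through $p^2=4-F^2$ together with the Wronskian relation $\theta(1,\lambda)\varphi'(1,\lambda)-\theta'(1,\lambda)\varphi(1,\lambda)=1$.

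I expect the main difficulty to be bookkeeping rather than conceptual: keeping the orientation of $\Gamma_k$ consistent with the sign of $d\lambda$ in (28), so that the overall sign is absorbed into the orientation convention for $\Gamma_k$ in the $\lambda$-integral, and correctly realizing the $\Phi_+\leftrightarrow\Phi_-$ swap under $t\mapsto -t$ so that the two halves assemble into the symmetric kernel. The finitely many spectral singularities and the excluded points $\{0,\pm t_1,\dots,\pm t_m\}$ form a set of $t$-measure zero and do not affect any of the integrals, while the $L_2(a,b)$ convergence of the series is inherited from Theorem 3 because $t\mapsto\lambda_k(t)$ is an analytic diffeomorphism on each regular arc. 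Finally, grouping the components with $k\in S$ into the single integral over $\gamma(s)=\bigcup_{k\in S}\Gamma_k$ produces (59).
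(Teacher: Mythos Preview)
Your proposal is correct and follows exactly the route the paper indicates: the paper's entire argument for this theorem is the single sentence preceding it, namely ``changing the variable to $\lambda$ in (50) by using (28) and taking into account that $\lambda_{n}(-t)=\lambda_{n}(t)$,'' and you have simply written out in full the computation this sentence summarizes --- the passage from $a_k(t)\Psi_{k,t}$ to the Floquet kernel via (25)--(28), the $t\mapsto -t$ symmetry producing the $\Phi_+F_-+\Phi_-F_+$ combination, and the algebraic reduction to $\phi(x,\lambda)$ using the Wronskian identity.
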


The results of [12] and [13] mentioned in the end of the introduction with
Theorem 3 and Theorem 4 imply

\begin{corollary}
\textit{If the potential }$q$ satisfies one of the Condition 1 and Condition 2
then the spectral expansions of $L(q)$ have the forms (50) \textit{and (59).}
\end{corollary}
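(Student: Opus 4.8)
The plan is to reduce the corollary entirely to the common hypothesis of Theorem~3 and Theorem~4, namely that $L(q)$ has no spectral singularity at infinity, and then to verify that hypothesis separately under each of the two conditions. Both Theorem~3 and Theorem~4 take the absence of a spectral singularity at infinity as their sole assumption and deliver the expansions (50) and (59) respectively; hence once that assumption is established, the conclusion follows at once by invoking these two theorems in turn.

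First I would handle Condition~1. This is the most direct case: the result of [12] recalled at the end of the introduction states that if Condition~1 holds then $L(q)$ is an asymptotically spectral operator and, in particular, has no spectral singularity at infinity. Thus the hypothesis of Theorem~3 and Theorem~4 is satisfied with no further work, and both expansions (50) and (59) hold.

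Next I would treat Condition~2. Here the potential has the special form (8), and Condition~2 explicitly assumes that (9) holds. The result of [13] quoted in the introduction asserts that, for a potential of the form (8), the operator $L(q)$ has no spectral singularity at infinity if and only if (9) holds. Since Condition~2 guarantees (9), the ``if'' direction of that equivalence yields that $L(q)$ has no spectral singularity at infinity, so again the hypothesis of Theorem~3 and Theorem~4 is met and the expansions (50) and (59) follow.

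There is no substantive obstacle in this argument; it is a bookkeeping combination of the cited external results with the two expansion theorems. The only point that warrants a moment's care is to confirm that the inequality (9) appearing inside Condition~2 is precisely the condition singled out in [13] as equivalent to the absence of a spectral singularity at infinity---which it is, since $\alpha=\pi^{-1}\arg(ab)$ is defined identically in both places. Once that identification is noted, the proof is simply the chain: each of Condition~1 and Condition~2 implies no singularity at infinity, which in turn triggers Theorem~3 and Theorem~4.
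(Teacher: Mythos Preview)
Your argument is correct and follows exactly the route the paper itself takes: the paper gives no separate proof but simply records that the results of [12] and [13] (ensuring, under Condition~1 or Condition~2 respectively, that $L(q)$ has no spectral singularity at infinity) combine with Theorem~3 and Theorem~4 to yield the expansions (50) and (59). Your write-up merely makes this implication explicit.
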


\end{document}